\def\MODE{2}
\newmdtheoremenv[innertopmargin=0,innerbottommargin=5,%
innerleftmargin=5,innerrightmargin=5]{fthm}[thm]{Theorem}
\pgfplotsset{compat=1.14}
\let\oldbibliography\thebibliography
\renewcommand{\thebibliography}[1]{\oldbibliography{#1}
\setlength{\itemsep}{1pt}} %Reducing spacing in the bibliography.
\newcommand{\ip}[2]{\left\langle #1 ,\, #2 \right\rangle}
\newcommand{\bbmat}[1]{{\renewcommand{\arraystretch}{1.0}\addtolength{\arraycolsep}{-1mm}\bmat{#1}}}
\newcommand{\leftbox}[1]{\parbox{4.6cm}{#1}}
\newcommand{\matbox}[1]{\begin{minipage}[m]{4.0cm}\centering$\bbmat{#1}$\\[-0.5\baselineskip]\end{minipage}}
\newcommand{\midbox}[1]{\begin{minipage}[m]{3.4cm}\centering#1\end{minipage}}
\newcommand{\midboxx}[1]{\begin{minipage}[m]{3.0cm}\centering#1\end{minipage}}
\newcommand\blfootnote[1]{%
	\begingroup
	\renewcommand\thefootnote{}\footnote{#1}%
	\addtocounter{footnote}{-1}%
	\endgroup
}
\begin{document}

\title{Unified Necessary and Sufficient Conditions for the Robust Stability of
	Interconnected Sector-Bounded Systems}

\if\MODE1
\author{Saman Cyrus, Laurent Lessard% <-this % stops a space
	\thanks{This work was supported by National Science Foundation (CISE/CCF). Award
		Number ??.}% <-this % stops a space
	\thanks{The authors are with the Department of Electrical and Computer
		Engineering,
		University of Wisconsin--Madison, Madison, WI 53706, USA.
	}%
	\thanks{The authors are with the Optimization Group at the Wisconsin
		Institute for Discovery.
		Email: {\tt\small \{cyrus2,laurent.lessard\}@wisc.edu}}%
}

\markboth{IEEE Transactions on Automatic Control}%
{}
\else
\author{Saman~Cyrus \and Laurent~Lessard}
\note{}
\fi

\maketitle

%%%%%%%%%%%%%%%%%%%%%%%%%%%%%%%%%%%%%%%%%%%%%%%%%%%%%%%%%%%%%%%%%%%%%%%%%%%%%%%%
\begin{abstract}
Classical conditions for ensuring the robust stability of a linear system in feedback with a sector-bounded nonlinearity include small gain, circle, passivity, and conicity theorems. In this work, we present a similar stability condition, but expressed in terms of relations defined on a general semi-inner product space. This increased generality leads to a clean result that can be specialized in a variety of ways. First, we show how to recover both sufficient and necessary-and-sufficient versions of the aforementioned classical results. Second, we show that suitably choosing the semi-inner product space leads to a new necessary and sufficient condition for weighted stability, which is in turn sufficient for exponential stability.
\end{abstract}

\blfootnote{S.~Cyrus and L.~Lessard are with the Department of Electrical and Computer Engineering and with the Wisconsin Institute for Discovery, both at the University of Wisconsin--Madison, Madison, WI 53706. Email addresses:\\
{\small\texttt{\{cyrus2,laurent.lessard\}@wisc.edu}}\\[1mm]
	This material is based upon work supported by the National
	Science Foundation under Grant No. 1710892.}

%%%%%%%%%%%%%%%%%%%%%%%%%%%%%%%%%%%%%%%%%%%%%%%%%%%%%%%%%%%%%%%%%%%%%%%%%%%%%%%%

\section{Introduction}
In this paper, we consider the feedback interconnection shown in Figure \ref{fig:StableNegativefeedback}, where a linear system $G$ is in feedback with a sector-bounded nonlinearity $\Phi$, and we are interested in conditions that guarantee closed-loop stability. 

Different forms of this problem have been a point of study for over 75 years since the early work of Lur'e \cite{lur1944theory}. Results typically take the form of fixing conditions on one of the system and describing conditions on the other system that guarantees stability of the closed loop.
Depending on the orientation of the conic sector characterizing $\Phi$, we can obtain passivity~\cite{desoer2009feedback}, small-gain~\cite{desoer2009feedback,Zames_inputoutput1966_part1}, circle~\cite{khalil2002nonlinear}, conic sector~\cite{Zames_inputoutput1966_part1}, and extended conic sector~\cite{bridgeman2016extended} theorems. These results are \textit{sufficient} conditions for stability.
When $G$ is linear and time-invariant (LTI) and $\Phi$ is sector-bounded and memoryless, we obtain the classical Lur'e formulation, where the circle criterion~\cite{brockett1966status} and passivity theorem \cite[Thm.~5.6.18]{vidyasagar2002nonlinear} are sufficient but not necessary for stability. However, if $\Phi$ is allowed to have dynamics, the circle criterion \cite[Thm.~6.6.126]{vidyasagar2002nonlinear} and passivity theorem \cite{khong2018converse} become both necessary and sufficient.

\tikzstyle{block} = [draw, fill=white, rectangle, minimum height=2.8em,
minimum width=2.8em]
\tikzstyle{sum} = [draw, fill=white, circle]
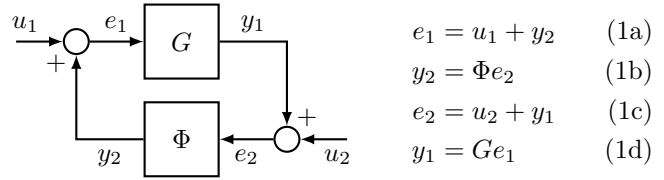
\begin{figure}[!htb]
\centering
\begin{minipage}[T]{0.55\linewidth}
	\centering
	\begin{tikzpicture}[auto, >=latex, thick]
	\node [block] (G) {$G$};
	\node [block] (Phi) at ($ (G) + (0,-1.3) $) {$\Phi$};
	\node [sum] (sum1) at ($ (G) + (-1.4,0) $) {};
	\node [sum] (sum2) at ($ (Phi) + (1.4,0) $) {};
	\node [coordinate] (input) at ($ (sum1) + (-.8,0) $) {};
	\node [coordinate] (output) at ($ (sum2) + (.8,0) $) {};
	\draw [->] (input) -- node[pos=0.2]{$u_1$} (sum1);
	\draw [->] (sum1) -- node{$e_1$} (G);
	\draw [->] (G) -| node[pos=0.25]{$y_1$} node[pos=0.95]{$+$} (sum2);
	\draw [->] (output) -- node[pos=0.2]{$u_2$} (sum2);
	\draw [->] (sum2) -- node{$e_2$} (Phi);
	\draw [->] (Phi) -| node[pos=0.25]{$y_2$} node[pos=0.95]{$+$} (sum1);
	\end{tikzpicture}
\end{minipage}%
\begin{minipage}[T]{0.45\linewidth}
\vspace{-1em}
\begin{subequations}\label{interconnect}
	\begin{align}
		e_1 &= u_1 + y_2 \label{1a}\\
		y_2 &= \Phi e_2 \label{1b}\\
		e_2 &= u_2 + y_1 \label{1c}\\ 
		y_1 &= G e_1 \label{1d}
	\end{align}
\end{subequations}
\end{minipage}
\caption{Two interconnected systems. We assume in this work that $G$ is linear and $\Phi$ is a sector-bounded nonlinearity.\label{fig:StableNegativefeedback}}
\vspace{-2mm}
\end{figure}

Sector bounds are usually defined as bounds on cumulative sums of inner products on $\Ltwoe$, but they can also be defined as holding pointwise in time~(see for example \cite[\S6.1]{khalil2002nonlinear} and \cite[\S1]{desoer2009feedback}). Results can also be formulated in discrete or continuous time.

Finally, one can use a different notion of stability. For example, recent works have developed conditions that guarantee robust \textit{exponential} stability. Under mild assumptions, input-output stability automatically implies exponential stability~\cite{Rantzer97systemanalysis,jonsson1996systems}. However, constructing an exponential rate via a gain bound is conservative in general~\cite{boczar2017exponential}. Less conservative sufficient conditions appeared in~\cite{boczar2017exponential,BinITAC2016} but it is not known whether these conditions are also necessary. These issues arose in the context of analyzing iterative algorithms~\cite{lessard2016analysis,Cyrus2018}, where it is desirable to have tight bounds on worst-case convergence rates.

\paragraph{Main contribution.} Our main contribution is a robust stability theorem that unifies and generalizes many of the aforementioned classical results by distilling them down to their fundamental components (Theorem~\ref{thm:main}). We work in a general \textit{semi-inner product space} (See Section~\ref{sec:mainpre}), we define $G$ and $\Phi$ as \textit{relations} rather than operators, and our result is both necessary and sufficient. The added generality we provide leads to a clean result that avoids the usual technicalities associated with the extended spaces $\Ltwoe$ and $\ltwoe$. Indeed, our setting need not include a notion of time, so there is no need to worry about causality, boundedness, or even well-posedness.

In Section~\ref{sec:Specialization}, we show how classical results in $\ltwoe$, including sufficient-only cases, follow directly from Theorem~\ref{thm:main}. We also clarify exactly how and when causality, boundedness, and well-posedness come into play. In Section~\ref{sec:Exponential}, we use Theorem~\ref{thm:main} to obtain a new weighted stability result that is both necessary and sufficient.

\paragraph{Related work.} Several prior works have also provided unified versions of classical robust stability results.
We cite two examples: the extended conic sector theorem, which can handle the case where $G$ is unstable~\cite{bridgeman2016extended}, and a loop-shifting transformation that relates passivity, small-gain, and circle theorems~\cite{anderson1972small}.
Nevertheless, the present work is unique in its use of semi-inner product spaces and its ability to address exponential stability.

There are also generalizations to cases where $G$ is nonlinear or $\Phi$ is not sector-bounded. Examples include dissipativity theory~\cite{willems1972dissipative}, integral quadratic constraints~\cite{megretski1997system,pfifer2015integral}, and graph separation theorems~\cite{teel1996input,safonov1980stability}. These efforts lie beyond the scope of the present work.

Finding necessary and sufficient stability guarantees has been a point of interest in different applications, including robotics \cite{stramigioli2015energy,colgate1988robust}, robust control \cite[p.~212]{zhou1996robust}, \cite[p.~158]{narendra1973frequency},  and in finding tight upper bounds for the convergence rate of iterative optimization algorithms \cite[\S7]{lessard2016analysis}.

\section{Main result}\label{sec:mainpre}
\paragraph{Semi-inner products.}
	A \textit{semi-inner product space} is a vector space $\mathcal{X}$ equipped with a semi-inner product $\ip{\cdot}{\cdot}$. This is identical to an inner product except that it lacks definiteness. In other words, the associated semi-norm $\norm{x}^2\defeq\langle x,x \rangle$ satisfies $\norm{x}\ge 0$ but $\|x\|=0$ need not imply that $x=0$. We say the semi-inner product space is \textit{\textbf{nontrivial}} if the set $\set{x\in\mathcal{X}}{\norm{x}>0}$ is nonempty.	We refer the reader to~\cite{conway} for further details.
	
	\paragraph{Relations.} A \textit{relation} $R$ on $\mathcal{X}$ is a subset of the product space $R \subseteq \mathcal{X}\times \mathcal{X}$. We denote the set of all relations on $\mathcal{X}$ as $\mathcal{R}(\mathcal{X}) \defeq 2^{\mathcal{X}\times\mathcal{X}}$. We write $Rx$ to denote any $y\in\mathcal{X}$ such that $(x,y)\in R$. A relation $L\in\mathcal{R}(\mathcal{X})$ is \textit{\textbf{linear}} if it has the property that $(\lambda_1x_1+\lambda_2 x_2, \lambda_1 y_1+\lambda_2 y_2)\in L$ for all $(x_1,y_1)$, $(x_2,y_2)\in L$ and $\lambda_1$, $\lambda_2 \in \mathbb{R}$. We define $\mathcal{L}$ to be the set of all linear relations, so $L\in\mathcal{L}\subseteq \mathcal{R}(\mathcal{X})$.
	
	We define $\mathcal{X}^2$ to be the augmented vectors $u\eqdef\left(\begin{smallmatrix}u_1\\u_2\end{smallmatrix}\right)$ where $u_1,u_2\in\mathcal{X}$. We overload matrix multiplication to have an intuitive interpretation in~$\mathcal{X}^2$. Specifically, for any $\xi, \zeta\in\mathcal{X}^2$ and any matrix $N\in\R^{2\times 2}$,
	\[
	N\xi = \bmat{N_{11} & N_{12} \\ N_{21} & N_{22} }\bmat{\xi_1 \\ \xi_2} \defeq \bmat{ N_{11} \xi_1 + N_{12} \xi_2 \\ N_{21} \xi_1 + N_{22} \xi_2} \in \mathcal{X}^2.
	\]
	Likewise, inner products in $\mathcal{X}^2$ have the interpretation
	\[
	\ip{\xi}{\zeta} = \ip{\bmat{\xi_1\\ \xi_2}}{\bmat{\zeta_1\\\zeta_2}} \defeq \ip{\xi_1}{\zeta_1} + \ip{\xi_2}{\zeta_2}.
	\]
	The closed-loop system of Figure \ref{fig:StableNegativefeedback} defines relations:
	\begin{subequations}\label{eq:relations}
		\begin{align*}
		R_{uy} &\defeq \left\{(u,y)\in \mathcal{X}^2\times\mathcal{X}^2 \,|\,\eqref{interconnect} \text{ holds for some }e\in\mathcal{X}^2 \right\}\\
		R_{ue} &\defeq \left\{(u,e)\in \mathcal{X}^2\times\mathcal{X}^2\,|\,\eqref{interconnect} \text{ holds for some }y\in\mathcal{X}^2 \right\}
		\end{align*}
	\end{subequations}
	We call a set of relations $\mathcal{C}\subseteq \mathcal{R}(\mathcal{X})$ \textit{\textbf{feedback-invariant}} if $\set{(u_i,y_j)}{(u,y)\in R_{uy}} \in \mathcal{C}$ for all $G,\Phi\in \mathcal{C}$ and for all $i,j \in \{1,2\}$.
	We call $\mathcal{C}$ \textit{\textbf{complete}} if given any $x,y \in \mathcal{X}$, there exists $\Phi\in\mathcal{C}$ such that $(x,y)\in\Phi$.
\newpage
	
\begin{fthm}[Main result]\label{thm:main}
	Let $\mathcal{X}$ be a nontrivial semi-inner product space, let	$M =M^\tp \in \R^{2\times 2}$ be given and let $\mathcal{C}\subseteq \mathcal{R}(\mathcal{X})$ be complete and feedback-invariant. Suppose $G \in \mathcal{L}\cap\mathcal{C}$. The following are equivalent.
	\begin{enumerate}[label=(\roman*),itemindent=0pt,labelindent=0pt]
		\item\label{thm_it_i} There exists $N=N^\tp\! \in \R^{2\times 2}$ satisfying $M\!+\!N\!\prec \!0$ (positive definite sense) such that $G$ satisfies
		\begin{equation}\label{G}
		\ip{ \bmat{G\xi\\ \xi} }{ N \bmat{G\xi\\ \xi} } \ge 0 \qquad\text{for all
		}\xi\in\mathcal{X}.
		\end{equation}
		\item\label{thm_it_ii} There exists $\gamma>0$ such that for all $\Phi\in\mathcal{C}$, if
		\begin{equation}\label{Phi}
		\ip{ \bmat{\xi\\\Phi \xi} }{ M \bmat{\xi\\\Phi \xi} } \ge 0 \qquad\text{for all
		}\xi\in\mathcal{X},
		\end{equation}
		then for all $(u,y)\in R_{uy}$, the following bound holds
		\begin{equation}\label{norm}
		\norm{y} \le \gamma \norm{u}.
		\end{equation}
	\end{enumerate}
\end{fthm}

\begin{proof}
See Appendix~\ref{sec:appendix1} for a detailed proof.
\end{proof}

\begin{rem}
Equation~\eqref{norm} can be stated in terms of $(u,e)$ instead of $(u,y)$. Specifically,
it is easy to show that \eqref{norm} holds for all $(u,y)\in R_{uy}$ if and only if there exists some $\bar\gamma > 0$ such that $\norm{e} \le \bar\gamma\norm{u}$ holds for all $(u,e)\in R_{ue}$.
\end{rem}

Theorem~\ref{thm:main} applies to a general semi-inner product space, which need not include a notion of time. Therefore, the notions of causality, boundedness, stability, and well-posedness do not come into play.

%%%%%%%%%%%%%%%%%%%%%%%%%%%%%%%%%%%%%%%%%%%%%%%%%%%%%%%%%%%%%%%%%%%%%%%%%%%%%%%%
%3. SPECIALIZATIONS OF MAIN RESULT (changing H)

%
\section{Specializing the main result}\label{sec:Specialization}
In this section, we specialize Theorem~\ref{thm:main} to recover a variety of classical results. We restrict our attention to discrete-time results in the interest of space, though continuous-time extensions are straightforward.
	
Recall the extended space $\ltwoe$, which is the real vector space of semi-infinite sequences $\Z_+\to \R^m$. Also recall the square-summable subset $\ltwo \subset \ltwoe$. Specifically,
\begin{align*}
\ltwoe &\defeq \set{(x[0],x[1],\dots)}{\vphantom{\bl(} x[k] \in \R^n\text{ for }k=0,1,\dots}, \\
\ltwo &\defeq \set{ x \in \ltwoe }{ \norm{x} \defeq \bbbl(\sum_{k=0}^\infty \normm{x[k]}^2\bbbr)^{1/2} < \infty }.
\end{align*}
Here, the indices $[k]$ play the role of \textit{time}. We now recall some standard definitions. 
For any $x\in\ltwoe$, we define the truncated signal $x_T \in \ltwo$ as follows.
	\[
	x_T[k]\defeq  \left\{
	\begin{array}{lr}
	x[k] & 0\le k \le T\\
	0 & k \ge T+1
	\end{array}
	\right.
	\]
An operator $G$ is said to be \textit{\textbf{causal}} if for any $T\ge 0$ and $f\in\ltwoe$, we have  $(Gf)_T=(Gf_T)_T$. We will now apply Theorem~\ref{thm:main} to the $\ltwoe$ space equipped
with a particular semi-inner product defined as 

\begin{rem}\label{rem:l2vsl2e}
It is  not fruitful to specialize Theorem~\ref{thm:main} to $\mathcal{X}=\ltwo$ because  \eqref{G} and \eqref{Phi} would imply $y_1, y_2 \in \ltwo$; we would be assuming the very thing we are trying to prove. We will instead specialize Theorem~\ref{thm:main} to $\mathcal{X} = \ltwoe$. 
\end{rem}

\paragraph{Causality and well-posedness.}
A possible concern in assessing stability of an interconnected systems in the form of Figure \ref{fig:StableNegativefeedback} is existence and uniqueness of solutions $e$ and $y$ for all choices of $u$. One solution is to use relations \cite{Zames_inputoutput1966_part1,vidyasagar2002nonlinear,Schaft2017L2,khong2018converse,safonov1980stability,teel1996graphs}, which avoids the issue entirely since all maps are invertible when viewed as relations. This amounts to using $\mathcal{C}=\mathcal{R}(\mathcal{X})$.  

Alternatively, one can assume that both $G$ and $\Phi$ are causal operators \cite{zhou1996robust,megretski1997system,Zames_inputoutput1966_part2,vidyasagar2002nonlinear,desoer2009feedback} rather than relations, which implies that if the closed-loop map exists, it must be causal as well~\cite[Prop.~1.2.14]{Schaft2017L2}. To work with causal operators, it is generally required to assume a notion of \textit{well-posedness}. In the $\ltwoe$ case, well-posedness requires the map $u \mapsto (e,y)$ to exist and be unique. Viewed through the lens of Theorem~\ref{thm:main}, if we let $\mathcal{C}$ be the set of causal operators, then our assumption of \textit{invariance} precisely corresponds to assuming well-posedness. Meanwhile, our assumption of \textit{completeness} is a technical condition that is automatically satisfied in $\ltwoe$.

\begin{defn}[cumulative semi-inner product]
Define $\ip{\cdot}{\cdot}_T$ to be the sum of the component-wise
inner products up to time $T$. That is, $\ip{x}{y}_T\defeq\ip{x_T}{y_T}$.
Also define the associated semi-norm $\norm{x}_T^2 \defeq \ip{x}{x}_T$.
\end{defn}

We now state a specialization of Theorem~\ref{thm:main} to the extended space $\ltwoe$, which we prove in Appendix~\ref{sec:appendix2}.

\begin{cor}[$\ltwo$ stability]\label{cor:cumulative}
	Let $M=M^\tp\! \in \R^{2\times 2}$ with $M\npreceq 0$ be given\footnote{The condition $M\npreceq 0$ is only required to prove \ref{Cit_ii}$\implies$\ref{Cit_i}. The case $M\preceq 0$ also corresponds to~\eqref{phi_cum} being degenerate.} and suppose $G:\ltwoe\to\ltwoe$ is a
	causal linear operator. The following statements are equivalent:
	\begin{enumerate}[label=(\roman*),itemindent=0pt,labelindent=0pt]
		\item\label{Cit_i} There exists $N=N^\tp \in \R^{2\times 2}$ satisfying $M + N \prec 0$ such that for all $\xi\in\ltwoe$ and $T \ge 0$, $G$ satisfies
		\begin{equation}\label{G_cum}
		\ip{ \bmat{G\xi\\ \xi} }{ N \bmat{G\xi\\\xi} }_T \ge 0.
		\end{equation}
		\item\label{Cit_ii} There exists $\gamma>0$ such that for all causal $\Phi:\ltwoe\to\ltwoe$ where the interconnection of $G$ and $\Phi$ is well-posed, if the following statement holds for all $T\ge 0$:
		\begin{equation}\label{phi_cum}
		\ip{ \bmat{\xi \\\Phi \xi} }{ M \bmat{\xi\\\Phi \xi} }_T \ge 0 \qquad\text{for all
		}\xi\in\ltwoe,
		\end{equation}
		then for all $(u,y) \in R_{uy}$ with $u\in\ell_2$,
			\begin{equation}\label{norm_l2}
		\norm{y} \le \gamma \norm{u}.
		\end{equation}
	\end{enumerate}
\end{cor}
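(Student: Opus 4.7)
The plan is to apply Theorem~\ref{thm:main} at every truncation level $T \ge 0$, viewing $\ltwoe$ as a nontrivial semi-inner product space under $\ip{\cdot}{\cdot}_T$, with $\mathcal{C}$ the class of causal operators on $\ltwoe$ whose interconnection with $G$ is well-posed. Completeness of $\mathcal{C}$ reduces to building, for any prescribed $(x,y) \in \ltwoe \times \ltwoe$, a causal operator reproducing $x \mapsto y$, for instance one that is zero outside a compatible truncation window; feedback-invariance follows from causality of $G$ and $\Phi$ together with the well-posedness hypothesis.

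For~\ref{Cit_i}$\Rightarrow$\ref{Cit_ii}, hypothesis~\ref{Cit_i} supplies~(\ref{G}) of Theorem~\ref{thm:main} at every $T$ with a common $N$, and~(\ref{phi_cum}) supplies~(\ref{Phi}) at every $T$. Theorem~\ref{thm:main} then yields $\norm{y}_T \le \gamma \norm{u}_T$ with a single $\gamma$ depending only on $M$ and $N$. For $u \in \ltwo$, sending $T \to \infty$ using $\norm{u}_T \le \norm{u}$ and the monotonicity of $\norm{y}_T$ produces~(\ref{norm_l2}).

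For~\ref{Cit_ii}$\Rightarrow$\ref{Cit_i}, fix $T$. By causality of $G$ and $\Phi$, truncating the input to $u_T \in \ltwo$ yields a closed-loop output $\bar y$ with $\bar y_T = y_T$, so~\ref{Cit_ii} applied to $u_T$ gives $\norm{y}_T \le \gamma \norm{u}_T$. Combined with a causal extension past time $T$ of any $\Phi$ satisfying only the level-$T$ IQC---appending after time $T$ a scalar gain $\phi$ chosen so that $(1,\phi)^\tp M (1,\phi) \ge 0$, a vector available precisely because $M \npreceq 0$---this certifies hypothesis~(ii) of Theorem~\ref{thm:main} at level $T$ and produces for each $T$ some $N_T$ with $M + N_T \prec 0$ satisfying the level-$T$ form of~(\ref{G_cum}).

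The hardest step is consolidating the family $\{N_T\}$ into a single $N$ that serves at every level. I expect a finite-dimensional compactness argument on $\R^{2\times 2}$, the admissible sets being nested and bounded thanks to the strict constraint $M + N \prec 0$ and the uniform $\gamma$, with the fallback of inspecting the proof of Theorem~\ref{thm:main} to observe that the returned $N$ depends only on $(M,\gamma)$, in which case the same $N$ works at every $T$ without further argument. The role of $M \npreceq 0$ is isolated to the extension step above, which explains why the condition appears only in the reverse direction.
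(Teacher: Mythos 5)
Your forward direction \ref{Cit_i}$\implies$\ref{Cit_ii} matches the paper's proof essentially verbatim (apply Theorem~\ref{thm:main} once per truncation level, note $\gamma$ depends only on $M,N$, let $T\to\infty$). The reverse direction also follows the paper's broad strategy, but the step you correctly flag as hardest --- consolidating $\{N_T\}$ into a single $N$ --- is where your argument has a genuine gap. Your ``fallback'' is false: the $N$ produced by the necessity proof of Theorem~\ref{thm:main} is $N_T=-\tfrac{1}{\tau_T}I-M$, where $\tau_T$ is the S-lemma multiplier for the level-$T$ semi-inner product, so it depends on $T$ and not only on $(M,\gamma)$. Your primary suggestion (nested admissible sets plus compactness) also does not close: the admissible set at each level is the intersection of a closed cone of quadratic constraints with the \emph{open} set $\{N: M+N\prec 0\}$, and a nested family of such nonempty sets can have empty intersection if the cones collapse onto the boundary $M+N\preceq 0$. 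What is actually needed is a uniform separation from that boundary, and this is precisely where $M\npreceq 0$ enters in the paper: one shows that any $\tau\ge\tau_T$ also certifies level $T$, that $M\npreceq 0$ forces $\sup_{T}\tau_T<\infty$, and hence that the single scalar $\tau=\sup_T\tau_T$ (equivalently $N=-\tfrac{1}{\tau}I-M$, with $M+N=-\tfrac{1}{\tau}I\prec 0$) works at every level. You instead assign $M\npreceq 0$ to your extension construction, which misses its actual role.

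A secondary issue: your extension of a $\Phi$ satisfying only the level-$T$ constraint to one satisfying \eqref{phi_cum} at all levels (by appending a static gain $\phi$ with $(1,\phi)M(1,\phi)^\tp\ge 0$ after time $T$) does not work as stated. The level-$T$ inequality does not imply the level-$T'$ inequality for $T'<T$, so an operator agreeing with $\Phi$ up to time $T$ can still violate \eqref{phi_cum} at earlier truncations; the appended tail only controls levels beyond $T$. To be fair, you are here trying to repair a mismatch (Theorem~\ref{thm:main} at level $T$ quantifies over all $\Phi$ satisfying the single level-$T$ constraint, while hypothesis \ref{Cit_ii} only constrains $\Phi$ satisfying \eqref{phi_cum} at every level) that the paper's own outline passes over in silence, so the instinct is sound even though the construction is not.
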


\subsection{Recovering necessary and sufficient results}

Corollary~\ref{cor:cumulative} may now be applied to a variety of different scenarios by appropriately choosing $M,N\in\R^{2\times 2}$.

\begin{rem}[sign convention]\label{rem:Ntilde}
	Although we used the positive feedback sign convention in Figure~\ref{fig:StableNegativefeedback}, using the negative feedback convention instead simply amounts to replacing $N$ by $\tilde N$ in Theorem~\ref{thm:main} and Corollary~\ref{cor:cumulative}, where
	\[
	N \defeq \bmat{N_{11} & N_{12}\\ N_{21} & N_{22}}
	\quad\text{and}\quad
	\tilde N \defeq \bmat{N_{11} & -N_{12}\\ -N_{21} & N_{22}}.
	\]\vspace{0pt}
\end{rem}

\noindent Consider the classical passivity result by Vidyasagar (which is a sufficient-only result), and may be found in~\cite[Thm. 6.7.3.43]{vidyasagar2002nonlinear}. 
	\begin{thm}[Vidyasagar]\label{thm:vidyasagar}
		Consider the system 
		\[
		\left\{
		\begin{array}{lr}
		e_1  = u_1 - y_2, & y_1 = G e_1\\
		e_2 = u_2 + y_1, & y_2 = \Phi e_2
		\end{array}
		\right.
		\]
		Suppose there exist constants $\epsilon_1$, $\epsilon_2$, $\delta_1$, $\delta_2$ such that for all $\xi\in\ltwoe$ and for all $T\ge 0$
		\begin{subequations}\label{eq:passivityeq}
		\begin{align}
			\langle \xi, G\xi \rangle_T &\ge \epsilon_1 \| \xi\|_T^2 + \delta_1\| G\xi \|_T^2\\
			\langle \xi, \Phi \xi \rangle_T &\ge \epsilon_2 \| \xi\|_T^2 + \delta_2\| \Phi \xi \|_T^2
		\end{align}
		\end{subequations}
	Then the system is $\ltwo$-stable if $\delta_1+ \epsilon_2>0$ and $\delta_2+\epsilon_1>0$.		
	\end{thm}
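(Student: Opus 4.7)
The plan is to invoke Corollary~\ref{cor:cumulative} in direction \ref{Cit_i}$\Rightarrow$\ref{Cit_ii}, by choosing matrices $M$ and $N$ that recast the two passivity inequalities \eqref{eq:passivityeq} as the quadratic constraints \eqref{G_cum} and \eqref{phi_cum}. Since Vidyasagar uses the negative-feedback convention ($e_1 = u_1 - y_2$), I will first reduce to the positive-feedback setting of Figure~\ref{fig:StableNegativefeedback}: setting $\hat\Phi \defeq -\Phi$ and $\hat y_2 \defeq -y_2$ turns the loop into a positive-feedback interconnection of $G$ and $\hat\Phi$, with the same $e$ signals, the same $y_1$, and $\|\hat y_2\|_T = \|y_2\|_T$. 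Hence $\ltwo$-stability of the original loop is equivalent to $\ltwo$-stability of the transformed loop. (Alternatively, Remark~\ref{rem:Ntilde} could be invoked directly.)

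Next, I would expand the quadratic forms in \eqref{G_cum} and \eqref{phi_cum} and match coefficients against \eqref{eq:passivityeq}, substituting $\hat\Phi\xi = -\Phi\xi$ in the latter. The unique symmetric choices that achieve this are
\[
N = \bmat{-\delta_1 & 1/2 \\ 1/2 & -\epsilon_1},\qquad
M = \bmat{-\epsilon_2 & -1/2 \\ -1/2 & -\delta_2}.
\]
With these choices, the hypothesis \eqref{eq:passivityeq} is literally the same as the pair \eqref{G_cum}, \eqref{phi_cum}. It remains only to verify $M + N \prec 0$, and a direct computation gives
\[
M + N = \bmat{-(\delta_1 + \epsilon_2) & 0 \\ 0 & -(\epsilon_1 + \delta_2)},
\]
which is negative definite precisely when $\delta_1 + \epsilon_2 > 0$ and $\delta_2 + \epsilon_1 > 0$. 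These are exactly Vidyasagar's hypotheses, so Corollary~\ref{cor:cumulative} supplies the gain bound $\norm{y}\le\gamma\norm{u}$, which is $\ltwo$-stability.

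The main obstacle here is really just bookkeeping — keeping the sign convention straight is what controls whether the off-diagonal entries of $M$ and $N$ carry $+1/2$ or $-1/2$, and a sign slip would destroy the match with \eqref{eq:passivityeq}. Two smaller caveats are worth flagging: Corollary~\ref{cor:cumulative} assumes $G$ is linear and that the interconnection is well-posed, so we inherit those assumptions implicitly from the framework; and the condition $M \npreceq 0$ from Corollary~\ref{cor:cumulative} is not needed here, since its footnote confirms that it is required only for the converse direction \ref{Cit_ii}$\Rightarrow$\ref{Cit_i}.
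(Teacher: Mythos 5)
Your proposal is correct and follows essentially the same route as the paper: both recover Theorem~\ref{thm:vidyasagar} by matching the passivity inequalities \eqref{eq:passivityeq} to the quadratic constraints \eqref{G_cum}--\eqref{phi_cum} and invoking the sufficiency direction of Corollary~\ref{cor:cumulative}, arriving at the same $M+N=\mathrm{diag}\bigl(-(\delta_1+\epsilon_2),\,-(\delta_2+\epsilon_1)\bigr)\prec 0$. The only (immaterial) difference is where the negative-feedback sign flip is absorbed: you negate $\Phi$ and place the $-\tfrac{1}{2}$ off-diagonals in $M$, whereas the paper keeps $M$ matched directly to $\Phi$ and flips the off-diagonals of $N$ via Remark~\ref{rem:Ntilde}; the two bookkeeping conventions are related by congruence with $\mathrm{diag}(1,-1)$ and yield the identical final condition.
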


To obtain a corresponding necessary and sufficient result using Corollary~\ref{cor:cumulative}, compare \eqref{G_cum}--\eqref{phi_cum} to~\eqref{eq:passivityeq}, which yields the following values of $\tilde{N}$, $N$, and $M$ (refer to Remark \ref{rem:Ntilde}). 
\[
\tilde{N} = \addtolength{\arraycolsep}{-2pt}\bmat{-\delta_1 & \frac{1}{2}\\\frac{1}{2} & -\epsilon_1}\!,\,
N =\bmat{-\delta_1 & -\frac{1}{2}\\ -\frac{1}{2} & -\epsilon_1}\!,\,
M = \bmat{-\epsilon_2& \frac{1}{2}\\ \frac{1}{2} & -\delta_2 }.
\]
Applying Corollary \ref{cor:cumulative}, we require $M+N\prec 0$; thus $\delta_1+ \epsilon_2>0$ and $\delta_2+\epsilon_1>0$, which recovers Theorem~\ref{thm:vidyasagar}.
		
Similar specializations of Corollary~\ref{cor:cumulative} apply to the small-gain theorem \cite[Thm.~5.6]{khalil2002nonlinear}, extended conic sector theorem~\cite{bridgeman2016extended}, circle criterion~\cite{jonsson2001lecture}, and other versions of passivity such as Vidyasagar~\cite[Thm.~6.6.58]{vidyasagar2002nonlinear} and Khong~\& Van~der~Schaft~\cite{khong2018converse}. See Table~\ref{Tab:comparison} for a summary of these results. The conditions of Corollary \ref{cor:cumulative} such as~\eqref{G_cum} can be checked via semidefinite programming~\cite{willems1971least}.

\begin{rem}
	Many results in the literature assume one of the systems is memoryless. As we will discuss in Section~\ref{sec:Existing}, this makes Corollary~\ref{cor:cumulative} sufficient-only. Nevertheless, $M$ and $N$ are the same in both cases.
\end{rem}

\subsection{Recovering sufficient-only results}\label{sec:Existing}

Here we discuss sufficient-only results from the literature that can also be obtained from Corollary~\ref{cor:cumulative} via a suitable relaxation. We discuss two such relaxations.

\paragraph{Memoryless systems.} If we restrict $\Phi:\ltwoe\to\ltwoe$ to be time-invariant and memoryless, it is equivalent to an operator $\phi:\R^m\to\R^m$ that operates pointwise in time. Consequently, if $\Phi$ satisfies a sector bound of the form
\begin{equation}\label{phi_pointwise}
\ip{\bmat{\xi\\\phi(\xi)}}{M\bmat{\xi\\\phi(\xi)}} \ge 0\quad\text{ for all }\xi\in \R^m,
\end{equation}
then $\Phi$ also satisfies the cumulative relationship~\eqref{phi_cum} for all $T$. Therefore, if we define condition \textit{(iii)} to be the same as condition \textit{\ref{Cit_ii}} from Corollary~\ref{cor:cumulative}, except~\eqref{phi_cum} is replaced by~\eqref{phi_pointwise}, then we have
\textit{(i)}$\iff$\textit{(ii)}$\implies$\textit{(iii)}.
So in general, if \textit{(i)} fails to hold, there must exist some $\Phi$ satisfying~\eqref{phi_cum} such that~\eqref{norm_l2} fails, but such a $\Phi$ need not be time-invariant or memoryless. Examples of this case in the classical literature include
\cite{narendra1973frequency,brockett1966status}. 

\paragraph{Nested sector bounds.} Another possible relaxation of Corollary~\ref{cor:cumulative} is to consider \textit{nested sectors} for one of the systems. For example, define \textit{(i')} to be the same as \textit{(i)} except $N$ is replaced by some $\hat N \preceq N$. Similarly, define \textit{(ii')} to be the same as \textit{(ii)} except $M$ is replaced by some $\hat M \preceq M$. Then, we have the implications: \textit{(i')}$\implies$\textit{(i)}$\iff$\textit{(ii)}$\implies$\textit{(ii')}.
The implication \textit{(i')}$\implies$\textit{(ii')} cannot be reversed in general, and is therefore a sufficient-only condition.

\begin{table*}[t]
\caption{
Existing sufficient conditions for stability that can be transformed into necessary-and-sufficient conditions via Corollary~\ref{cor:cumulative}. We use a positive feedback convention; for negative feedback, replace $N$ by $\tilde N$ as in Remark~\ref{rem:Ntilde}.}
\label{Tab:comparison}
\centering
\small
\begin{tabular}{lccc}
\toprule
\leftbox{\textbf{Name of Theorem}} &
\midbox{$\boldsymbol{M}$} &
\midbox{$\boldsymbol{N}$} &
\midboxx{$\boldsymbol{M+N\prec 0}$} \\
\midrule
\leftbox{\textbf{Conic sector theorem}\\
\cite[Thm.~2a,~all three cases]{Zames_inputoutput1966_part1} or\\
\cite[Thm.~3.1,~all parts of Case~1]{bridgeman2016extended}}&
\matbox{\frac{-(a+\Delta)(b-\Delta)}{b-a-2\Delta} & \frac{-a-b}{2(b-a-2\Delta)}\\\frac{-a-b}{2(b-a-2\Delta)} & \frac{-1}{b-a-2\Delta} }&
\matbox{\frac{ab}{b-a+2ab\delta} & \frac{a+b}{2(b-a+2ab\delta)}\\ \frac{a+b}{2(b-a+2ab\delta)} & \frac{(1+a\delta)(1-b\delta)}{b-a+2ab\delta} } &
\midbox{$a<b$, and either $\delta=0,\Delta>0$ or $\delta>0,\Delta=0$.}\\[4mm]
\leftbox{\textbf{Extended conic sector thm.}\\ \cite[Thm. 3.1, all parts of Case 2]{bridgeman2016extended}} & 
\matbox{\frac{(a-\Delta)(b+\Delta)}{b-a+2\Delta} & \frac{a+b}{2(b-a+2\Delta)}\\\frac{a+b}{2(b-a+2\Delta)} & \frac{1}{b-a+2\Delta} }&
\matbox{\frac{-ab}{b-a-2ab\delta} & \frac{-a-b}{2(b-a-2ab\delta)}\\ \frac{-a-b}{2(b-a-2ab\delta)} & \frac{-(1-a\delta)(1+b\delta)}{b-a-2ab\delta} } &
\midbox{Same as above}\\[4mm]
\leftbox{\textbf{Extended passivity}\\%
\cite[Thm.~6.6.58]{vidyasagar2002nonlinear}} &
\matbox{-\epsilon_2 & \frac{1}{2} \\ \frac{1}{2} & -\delta_2} &
\matbox{-\delta_1 & -\frac{1}{2}\\ -\frac{1}{2} &-\epsilon_1} &
\midbox{$\delta_1+ \epsilon_2 >0$ and $\delta_2 + \epsilon_1 >0$} \\[4mm]
\leftbox{\textbf{Small gain theorem}\\ \cite[Thm.~5.6]{khalil2002nonlinear}} &
\matbox{\gamma_2 & 0\\0 &-1/\gamma_2 } &
\matbox{-1/\gamma_1 & 0 \\ 0 & \gamma_1 } &
\midbox{$\gamma_1 \gamma_2 < 1$} \\[2mm]
\bottomrule
\end{tabular}
\end{table*}

\section{Weighted stability result}\label{sec:Exponential}
In this section, we present a specialization of Theorem~\ref{thm:main} that leads to a new necessary and sufficient condition for \textit{weighted stability}, which in turn is sufficient for exponential stability.
For a fixed $\rho\in(0,1]$, define the set $\ltwo^\rho \subset \ltwo$ of sequences $\{x[k]\}$ such that $\sum_{k=0}^\infty \rho^{-2k}\normm{x[k]}^2<\infty$. This can be thought of as enforcing that $x$ converge to zero exponentially fast. Define the corresponding semi-inner products as
$
\ip{x}{y}_{\rho,T} \defeq \sum^T_{k=0} \rho^{-2k}\ip{x[k]}{y[k]}
$.
Analogously to how we derived Corollary~\ref{cor:cumulative}, we have:

\begin{cor}[Weighted stability]\label{cor:exponential}
	Let $M =M^\tp \in \R^{2\times 2}$ with $M\npreceq 0$ and $\rho \in (0,1]$ be given. Suppose $G:\ltwoe\to\ltwoe$ is causal and linear. The following are equivalent.
\begin{enumerate}[label=(\roman*)]
	\item There exists $N=N^\tp \in \R^{2\times 2}$ satisfying $M + N \prec 0$ such
	that for all $\xi\in\ltwoe^\rho$ and $T  \ge 0$, $G$ satisfies
	\begin{equation}\label{G_rho}
	\ip{ \bmat{G\xi\\ \xi} }{ N \bmat{G\xi\\\xi} }_{\rho,T} \ge 0.
	\end{equation}
	\item There exists $\gamma >0$ such that for all causal $\Phi:\ltwoe^\rho\to\ltwoe^\rho$ where the interconnection of $G$ and $\Phi$ is well-posed, if the following condition holds for all $T\ge 0$
	\begin{equation}\label{phi_rho}
	\ip{ \bmat{\xi \\\Phi \xi} }{ M \bmat{\xi\\\Phi \xi} }_{\rho,T} \ge 0 \qquad\text{for all
	}\xi \in \ltwoe^\rho,
	\end{equation}
	then for all $(u,y) \in R_{uy}$ with $u\in\ltwo^\rho$, we have
	\begin{equation}\label{norm_expt2}
		\norm{y}_{\rho} \le \gamma \norm{u}_\rho.
	\end{equation}
\end{enumerate}
\end{cor}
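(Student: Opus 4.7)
The plan is to reduce Corollary~\ref{cor:exponential} to the already-established Corollary~\ref{cor:cumulative} via a time-domain rescaling. Introduce the causal linear operator $W_\rho : \ltwoe \to \ltwoe$ given by $(W_\rho x)[k] \defeq \rho^{-k} x[k]$; it has causal linear inverse $(W_\rho^{-1} y)[k] = \rho^k y[k]$, and a direct computation yields the key identity
\[
\ip{x}{y}_{\rho,T} = \ip{W_\rho x}{W_\rho y}_T \quad\text{for all } x,y\in\ltwoe,\ T\ge 0.
\]
In particular, $W_\rho$ is an isometric bijection from $\ltwo^\rho$ onto $\ltwo$ (with respective weighted and standard norms), and a bijection $\ltwoe \to \ltwoe$ that intertwines the weighted semi-inner product $\ip{\cdot}{\cdot}_{\rho,T}$ with the standard one $\ip{\cdot}{\cdot}_T$.

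Next, define the rescaled operators $\tilde G \defeq W_\rho G W_\rho^{-1}$ and $\tilde \Phi \defeq W_\rho \Phi W_\rho^{-1}$. Since $W_\rho^{\pm 1}$ are causal, linear, and act diagonally in time, $\tilde G$ is still causal and linear, and $\tilde \Phi$ remains causal. Substituting $\tilde \xi \defeq W_\rho \xi$ and invoking the key identity, one checks that \eqref{G_rho} for $G$ holds for all $\xi, T$ if and only if \eqref{G_cum} for $\tilde G$ holds for all $\tilde\xi, T$ with the same $N$, and analogously \eqref{phi_rho}$\iff$\eqref{phi_cum} for $\tilde\Phi$ with the same $M$. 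With $\tilde u = W_\rho u$, $\tilde e = W_\rho e$, $\tilde y = W_\rho y$, the interconnection equations \eqref{interconnect} for $(G,\Phi)$ and for $(\tilde G, \tilde \Phi)$ are equivalent, so well-posedness transfers in both directions, $u \in \ltwo^\rho \iff \tilde u \in \ltwo$, and $\norm{y}_\rho \le \gamma \norm{u}_\rho$ is equivalent to $\norm{\tilde y} \le \gamma \norm{\tilde u}$ with the same $\gamma$.

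Applying Corollary~\ref{cor:cumulative} to $\tilde G$ then yields the equivalence \textit{(i)}$\iff$\textit{(ii)} of Corollary~\ref{cor:exponential} once everything is pulled back through $W_\rho^{-1}$. The main obstacle I anticipate is the quantifier correspondence in part \textit{(ii)}: as $\Phi$ ranges over all causal operators on $\ltwoe^\rho$ whose interconnection with $G$ is well-posed and that satisfy \eqref{phi_rho}, the map $\Phi \mapsto \tilde\Phi = W_\rho \Phi W_\rho^{-1}$ must be a bijection onto the analogously-constrained causal operators on $\ltwoe$ for $\tilde G$. Because $W_\rho$ is a causality-preserving linear bijection of $\ltwoe$, the inverse is $\tilde\Phi \mapsto W_\rho^{-1} \tilde \Phi W_\rho$, and each structural property (causality, well-posedness, the $M$-sector inequality) is preserved in both directions by the key identity or by diagonality; carefully checking each of these preservation properties is where the bulk of the technical work lies.
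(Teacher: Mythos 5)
Your proposal is correct, but it takes a genuinely different route from the paper. The paper gives no separate proof of Corollary~\ref{cor:exponential}: it states that the result follows ``analogously to how we derived Corollary~\ref{cor:cumulative},'' i.e., by re-running the Appendix~\ref{sec:appendix2} argument with Theorem~\ref{thm:main} applied, for each $T$, to $\mathcal{X}=\ltwoe$ equipped with the weighted semi-inner product $\ip{\cdot}{\cdot}_{\rho,T}$ instead of $\ip{\cdot}{\cdot}_T$ --- this is exactly the kind of flexibility the semi-inner-product formulation of Theorem~\ref{thm:main} was designed to provide. You instead conjugate by the exponential weighting $W_\rho$ and invoke Corollary~\ref{cor:cumulative} as a black box. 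Both arguments are sound; the key identity $\ip{x}{y}_{\rho,T}=\ip{W_\rho x}{W_\rho y}_T$, the preservation of causality and linearity under conjugation by the memoryless bijection $W_\rho$, and the equivalence of the two interconnections under $(\tilde u,\tilde e,\tilde y)=(W_\rho u,W_\rho e,W_\rho y)$ all check out (note that $N$ and $M$ act by scalar multiplication componentwise and hence commute with the diagonal action of $W_\rho$ on $\mathcal{X}^2$). What your route buys is that the most delicate step of the paper's argument --- extracting a single $N$ valid for all $T\ge 0$ from the family $\tau_T$, using $M\npreceq 0$ --- is inherited from Corollary~\ref{cor:cumulative} rather than re-proven in the weighted setting; it also makes explicit why the hypothesis $M\npreceq 0$ reappears here. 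What the paper's route buys is uniformity of exposition and no dependence on the specific multiplicative structure of the weight: it would extend verbatim to any family of semi-inner products, whereas your reduction relies on the weight being implementable as a causal, causally invertible, time-diagonal operator. One small caution: your argument implicitly identifies $\ltwoe^\rho$ with $\ltwoe$ so that $W_\rho$ is a bijection between the domains appearing in the two corollaries; the paper leaves $\ltwoe^\rho$ undefined, so you should state this identification explicitly when matching the quantifier ranges in part (ii).
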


The weighted stability guarantee~\eqref{norm_expt2} in Corollary~\ref{cor:exponential} states that when inputs to the system tend to zero exponentially quickly (in the sense that $\lim_{k\to\infty} \rho^{-k} u_k = 0$ for some $\rho \in (0,1]$), then so do the outputs $y$. Under additional assumptions about $G$, this condition implies exponential stability, as detailed in Proposition~\ref{prop:ross}.

\begin{prop}[\!\!{\cite[Prop. 5]{boczar2017exponential}}]\label{prop:ross}
Suppose $G$ is a discrete-time LTI system and has a minimal realization with state $x[k]$. If the interconnection in Figure~\ref{fig:StableNegativefeedback} has weighted stability with weight $\rho \in (0,1]$, then there exists some $c>0$ such that for any initial $x[0]$ and with $u=0$, we have
\[
\normm{ x[k] } \le c \rho^k \normm{ x[0] } \qquad \text{for }k=0,1,\dots 
\]
\end{prop}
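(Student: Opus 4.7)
The plan is to convert the weighted stability hypothesis into a pointwise state-space Lyapunov decay via a Kalman--Yakubovich--Popov (KYP) argument, and then leverage minimality of the realization to upgrade the resulting quadratic decay into a pointwise bound on the state.

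First, I would invoke Corollary~\ref{cor:exponential}: weighted stability yields an $N = N^\tp$ with $M + N \prec 0$ satisfying~\eqref{G_rho}. The rescaling $\hat x[k] \defeq \rho^{-k} x[k]$ (and analogously $\hat e_1$, $\hat y_1$, $\hat \xi$) replaces $G$ by the LTI system $\hat G$ with data $(A/\rho,\, B/\rho,\, C,\, D)$, turning~\eqref{G_rho} into a standard hard IQC on $\hat G$. The discrete-time KYP lemma then produces a symmetric $P \succeq 0$ and a pointwise dissipation inequality along any trajectory of $\hat G$,
\[
\hat x[k+1]^\tp P\, \hat x[k+1] - \hat x[k]^\tp P\, \hat x[k] \le \bmat{\hat y_1[k] \\ \hat \xi[k]}^{\!\tp}\!\! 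N \bmat{\hat y_1[k] \\ \hat \xi[k]}.
\]

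Next, I would specialize to the closed loop with $u = 0$ (so that $e_1 = \Phi y_1$), undo the rescaling, and sum the dissipation inequality from $k=0$ to $k=T$. The sector bound~\eqref{phi_rho} shows that the $M$-supply accumulates nonnegatively, and the strict gap $M + N \prec 0$ converts this into a nonpositive upper bound on the accumulated $N$-supply. Setting $V(x) \defeq x^\tp P x$, this yields
\[
V(x[T+1]) \le \rho^{2(T+1)} V(x[0]) \qquad \text{for all } T \ge 0.
\]

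The main obstacle is upgrading this quadratic-form inequality into a bound on $\|x[k]\|$ itself, which requires $P \succ 0$ rather than merely $P \succeq 0$. Here minimality is essential: the strict gap $M + N \prec 0$ sharpens the KYP estimate to a strictly dissipative one, after which observability of $(C, A)$ rules out any nontrivial trajectory on which $V$ vanishes identically, forcing $P \succ 0$. With $P \succ 0$ in hand, setting $c \defeq \sqrt{\lambda_{\max}(P)/\lambda_{\min}(P)}$ converts $V(x[k]) \le \rho^{2k} V(x[0])$ into the claimed bound $\|x[k]\| \le c\,\rho^k\,\|x[0]\|$. Executing this $P \succ 0$ argument cleanly---without leaning on linearity of $\Phi$ or additional structural hypotheses---is the delicate technical step.
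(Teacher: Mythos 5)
First, a point of reference: the paper does not prove this proposition---it is quoted verbatim from \cite[Prop.~5]{boczar2017exponential}, and the proof there is elementary and purely input--output. Controllability of the minimal realization lets one realize the initial condition $x[0]$ as the effect of an external input of weighted energy $O(\normm{x[0]})$ supported on a finite window; the weighted gain bound then controls $\norm{y}_\rho$; and observability (together with $e_1=y_2$ when $u=0$) converts the tail estimate $\sum_{j\ge k}\normm{y[j]}^2 \le \rho^{2k}\norm{y}_{\rho}^2$ (valid because $\rho\le 1$) into $\normm{x[k]}\le c\,\rho^k\normm{x[0]}$. No IQC on $\Phi$ and no KYP/storage-function machinery is involved.

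Your KYP route is genuinely different, and it has two real gaps. (1) The hypotheses do not support your first step: the proposition assumes only that \emph{this} interconnection is weighted stable. It does not assume the bound holds uniformly over a complete, feedback-invariant class of $\Phi$'s satisfying \eqref{phi_rho}, nor that $M\npreceq 0$, which is what the necessity direction of Corollary~\ref{cor:exponential} needs in order to manufacture $N$ with $M+N\prec 0$; likewise your summation step requires $\Phi$ itself to satisfy \eqref{phi_rho}, which is nowhere among the proposition's hypotheses. The result is supposed to follow from the weighted $\ell_2$ gain bound alone. (2) Even granting such an $N$, the KYP lemma produces a symmetric $P$ of no particular sign; $P\succeq 0$ requires a required-supply/available-storage argument exploiting the hard (all-$T$) inequality \eqref{G_rho}, and the promotion to $P\succ 0$ via observability is not routine for an indefinite supply rate $N$. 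It works cleanly when, e.g., $N_{11}\prec 0$, so that a state with zero storage forces $CA^k x_0=0$ for all $k$, but not for general $N$ consistent with $M+N\prec 0$---and this is precisely the step you defer as ``delicate.'' Your intermediate Lyapunov summation ($V(x[T+1])\le\rho^{2(T+1)}V(x[0])$ given the dissipation inequality and \eqref{phi_rho}) is internally consistent, but the Gramian argument of the cited reference sidesteps both difficulties and matches the actual hypotheses; I would follow that route instead.
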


The converse of Proposition~\ref{prop:ross}, that exponential stability implies weighted stability, does not hold in general. For example, if $G=0$ then we have exponential stability for any $\Phi$. Proving such a converse result typically requires stronger assumptions on $\Phi$ such as Lipschitz continuity~\cite[\S6.46]{vidyasagar2002nonlinear}.

\section{Conclusion}
In this paper, we introduced a robust stability theorem (Theorem~\ref{thm:main}) framed in a general semi-inner product space. Our result unifies many existing results, including passivity, small-gain, and circle theorems. This includes both necessary-and-sufficient as well as sufficient-only versions, and relation-based as well as operator-based notions of systems. Our theorem also leads to a new result on weighted stability (Corollary~\ref{cor:exponential}).

%%%%%%%%%%%%%%%%%%%%%%%%%%%%%%%%%%%%%%%%%%%%%%%%%%%%%%%%%%%%%

\section{Acknowledgments}
Authors would like to thank R.~Boczar, \mbox{L.~Bridgeman}, A.~Packard, A.~Rantzer, P.~Seiler, A.~van~der~Schaft, B.~Van~Scoy, S.~Z.~Khong and M.~Vidyasagar for helpful discussions and comments.

\appendix
%%%%%%%%%%%%%%%%%%%%%%%%%%%%%%%%%%%%%%%%%%%%%%%%%%%%%%%%%%%%%%%%%%%%%%%%%%%%%%%%%
\section{Proof of Theorem~\ref{thm:main}}\label{sec:appendix1}
%2. PROOF OF MAIN THEOREM

\paragraph{Sufficiency:} \ref{thm_it_i}$\implies$\ref{thm_it_ii}. In other words, we show that \eqref{G} and \eqref{Phi} imply \eqref{norm}.
This part of the proof resembles \cite[Thm. 1]{mccourt2010control}. Pick any $(u,y) \in R_{uy}$ and let $e$ be an associated signal such that $(u,y,e)$ satisfies~\eqref{interconnect}.
Let $\xi=e_2$ in~\eqref{Phi} and let $\xi=e_1$ in~\eqref{G}. Using \eqref{interconnect} to eliminate $e_1$, $e_2$, Equations~\eqref{Phi} and \eqref{G} become:
\begin{align*}
\ip{ \bmat{u_2+y_1\\y_2} }{ M \bmat{u_2+y_1\\y_2} } &\ge 0\quad\text{and}\\
\ip{ \bmat{y_1\\ u_1+y_2} }{ N \bmat{y_1\\u_1+y_2} } &\ge 0.
\end{align*}
Sum the inequalities above and collect terms to obtain
\begin{multline*}
\ip{ \bmat{y_1\\y_2}\! }{ (M \!+\! N)\! \bmat{y_1 \\ y_2} } +2
\ip{ \bmat{y_1\\y_2}\! }{ \addtolength{\arraycolsep}{-2pt}\bmat{N_{12}&M_{11}\\N_{22}&M_{21}}\!\bmat{u_1\\u_2} } \\
+ \ip{ \bmat{u_1\\u_2}\! }{ \addtolength{\arraycolsep}{-2pt} \bmat{N_{22}&0\\0&M_{11}}\!\bmat{u_1\\u_2} }
\ge 0.
\end{multline*}
Since $M+N \prec 0$ by assumption, There exists $\eta > 0$ such that $M + N
\preceq -\eta I$. Applying this inequality together with Cauchy--Schwarz, we
obtain:
\begin{equation}\label{gamineq}
-\eta\norm{y}^2 + 2r \norm{y} \norm{u} + q \norm{u}^2 \ge 0,
\end{equation}
where 
$
r \defeq \left\| \left( \begin{smallmatrix} N_{12}&M_{11}\\N_{22}&M_{21}\end{smallmatrix} \right) \right\|_2
$
and
$
q \defeq \left\| \left( \begin{smallmatrix} N_{22}&0\\0&M_{11} \end{smallmatrix} \right) \right\|_2
$
are standard spectral norms in $\R^{2\times 2}$. Dividing \eqref{gamineq} by $\norm{u}^2$ and viewing the left-hand side as a polynomial in $\norm{y}/\norm{u}$, we observe that it always has two real roots and the inequality in~\eqref{gamineq} implies the bound
\[
\norm{y} \le \frac{1}{\eta}\left( r + \sqrt{r^2 +
\eta q} \right) \norm{u}.
\]
\paragraph{Necessity:} \ref{thm_it_ii}$\implies$\ref{thm_it_i}. This part of the proof is based on the S-lemma, which relates sets of points defined by quadratic inequalities. See~\cite{jonsson2001lecture,megretski_treil} and references therein.
We use a generalization of the S-lemma to semi-inner product spaces based on a Hilbert space version due to Hestenes~\cite[Thm.~7.1,~p.~354]{hestenes} and similar to \cite{khong2018converse}. This relies on the following notion of quadratic form.

\begin{defn}
	Let $\mathcal{V}$ be a real vector space. A \emph{quadratic form} $Q$ is a
	function $Q:\mathcal{V}\to \R$ that has associated with it a function
	$\tilde Q:\mathcal{V}\times \mathcal{V}\to\R$ such that the following properties hold
	for all $x,y,z\in \mathcal{V}$ and $a,b\in\R$.
	\begin{enumerate}[label=(P\arabic*),itemindent=1ex]
		\item $Q(x) = \tilde Q(x,x)$
		\item $\tilde Q(x,y) = \tilde Q(y,x)$
		\item $\tilde Q(x,ay+bz) = a\tilde Q(x,y)+b\tilde Q(x,z)$
		\item $Q(ax+by) = a^2Q(x) + 2ab \tilde Q(x,y) + b^2 Q(y)$
	\end{enumerate}
\end{defn}

\begin{lem} \label{sproc3}
	Let $\mathcal{V}$ be a real vector space and let $S\subseteq \mathcal{V}$ be a
	subspace.
	Let $\sigma_0$ and $\sigma_1$ be quadratic forms. The following statements are
	equivalent.
	\begin{enumerate}[label=(S\arabic*),itemindent=1ex]
		\item For all $x \in S$, we have $\sigma_1(x) \ge 0 \!\implies\! \sigma_0(x)
		\le 0$. \label{itb1}
		\item There exists $\tau\ge 0$ such that for all $x\in S$, we have\\
		$\hphantom{1}\sigma_0(x) + \tau \sigma_1(x) \le 0$. \label{itb2}
	\end{enumerate}
\end{lem}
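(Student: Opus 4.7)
The plan is to prove \ref{itb2}$\implies$\ref{itb1} directly, and tackle \ref{itb1}$\implies$\ref{itb2} via a Dines-style convexity result followed by hyperplane separation in $\R^2$. For the easy direction, given $\tau \ge 0$ with $\sigma_0(x) + \tau \sigma_1(x) \le 0$ on $S$, any $x \in S$ satisfying $\sigma_1(x) \ge 0$ yields $\sigma_0(x) \le -\tau \sigma_1(x) \le 0$ upon rearrangement, which is exactly \ref{itb1}.

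For \ref{itb1}$\implies$\ref{itb2}, I first consider the joint image $T \defeq \{ (\sigma_0(x), \sigma_1(x)) : x \in S \} \subseteq \R^2$ and show that $T$ is a convex cone. Conicity, in the form $(a,b) \in T \Rightarrow (\lambda^2 a, \lambda^2 b) \in T$ for every $\lambda \in \R$, is immediate from the homogeneity $Q(\lambda x) = \lambda^2 Q(x)$ supplied by (P4) with $b = 0$. Convexity is a Dines-type assertion: given $x,y \in S$, the bilinear identity (P4) applied simultaneously to $\sigma_0$ and $\sigma_1$ lets me parametrize the values of $(\sigma_0, \sigma_1)$ along a family such as $z(t) = (\cos t)\, x + (\sin t)\, y \in S$, sweeping out every convex combination of the endpoint images; the trivial cases $\dim S \le 1$ are handled separately. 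Condition \ref{itb1} is equivalent to the disjointness $T \cap B = \emptyset$ for $B \defeq \{(a,b) \in \R^2 : a > 0,\, b \ge 0\}$. Applying the separating hyperplane theorem to the two disjoint convex sets $T$ and $B$, with $B$ having nonempty interior, produces $(\alpha,\beta) \ne (0,0)$ satisfying $\alpha a + \beta b \ge 0$ on $B$ and $\alpha \sigma_0(x) + \beta \sigma_1(x) \le 0$ on $S$; testing the former on the axes of $B$ forces $\alpha \ge 0$ and $\beta \ge 0$.

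The step I expect to be the main obstacle is ruling out, or otherwise accommodating, the degenerate orientation $\alpha = 0$, since the separator then only delivers $\sigma_1(x) \le 0$ on $S$ and no direct control on $\sigma_0$. In the generic case $\alpha > 0$, I simply set $\tau \defeq \beta/\alpha \ge 0$ and obtain \ref{itb2} at once. In the degenerate case, \ref{itb1} collapses to the implication $\sigma_1(x) = 0 \Rightarrow \sigma_0(x) \le 0$, and I plan to treat it by exploiting (P4) on the cone $\{x \in S : \sigma_1(x) < 0\}$ and analyzing the ratio $\sigma_0(x)/(-\sigma_1(x))$: a quadratic rescaling of any near-maximizing sequence, combined with the reduced form of \ref{itb1}, rules out the supremum being $+\infty$, so any finite upper bound on this ratio serves as the required $\tau$. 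This yields \ref{itb2} in the degenerate case as well and completes the equivalence.
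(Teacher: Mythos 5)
Your easy direction and the main line of your argument for \ref{itb1}$\implies$\ref{itb2} (Dines-type convexity of the joint range $T=\{(\sigma_0(x),\sigma_1(x)):x\in S\}$ via reduction to two-dimensional subspaces, separation of $T$ from $B=\{(a,b):a>0,\,b\ge 0\}$, and taking $\tau=\beta/\alpha$ when $\alpha>0$) are sound; this is a standard and legitimate route, and the paper itself gives no proof to compare against --- it only cites Hestenes and asserts that because both inequalities in \ref{itb1} are non-strict, no regularity condition is needed. The problem is exactly the step you flagged: the degenerate case $\alpha=0$. Your proposed repair --- showing that $\sup\{\sigma_0(x)/(-\sigma_1(x)) : x\in S,\ \sigma_1(x)<0\}$ is finite by quadratically rescaling a near-maximizing sequence and invoking the reduced form of \ref{itb1} --- cannot work: the ratio is homogeneous of degree zero, so rescaling does not change it, and a maximizing sequence can normalize to a limit at which \emph{both} forms vanish, which is perfectly consistent with \ref{itb1} even while the ratio diverges.

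In fact the gap is not fillable, because the implication \ref{itb1}$\implies$\ref{itb2} is false in the degenerate case. Take $S=\mathcal{V}=\R^2$, $\sigma_0(x)=x_1x_2$, $\sigma_1(x)=-x_2^2$. Then $\sigma_1(x)\ge 0$ forces $x_2=0$ and hence $\sigma_0(x)=0$, so \ref{itb1} holds; but for any $\tau\ge 0$ the point $x=(\tau+1,\,1)$ gives $\sigma_0(x)+\tau\sigma_1(x)=1>0$, so \ref{itb2} fails. (Here $T=\{(a,b):b<0\}\cup\{(0,0)\}$, and the only separating line is $b=0$, i.e.\ $\alpha=0$ is forced.) The statement needs a regularity hypothesis such as the existence of $x_0\in S$ with $\sigma_1(x_0)>0$; under that hypothesis your $\alpha>0$ branch is the entire proof, since a Slater point is incompatible with $\sigma_1\le 0$ on $S$ and therefore rules out $\alpha=0$. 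This also means the lemma as stated in the paper, together with its claim that non-strictness obviates the usual regularity conditions, requires the same repair; in the paper's application the Slater condition should come from an assumption like $M\npreceq 0$ restricted to the relevant subspace, as is already hinted at in the footnote to Corollary~\ref{cor:cumulative}.
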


\begin{proof}
	Omitted. A similar result on Hilbert spaces is proved in~\cite[Thm.~7.1,~p.~354]{hestenes} and extends immediately to semi-inner product spaces. In our version, both inequalities in \ref{itb1} are non-strict, which means the standard regularity conditions are no longer required.
\end{proof}

\noindent We can now prove our result. We will use $\Theta$ to denote a generic tuple $(u,y,e) \in (\mathcal{X}^2)^3$. Define the sets:
\begin{align*}
S'_\Phi &\defeq \set{ \Theta \in (\mathcal{X}^2)^3 }{ \text{Equations \eqref{1a}--\eqref{1d} hold} }, \\
S &\defeq \set{ \Theta \in (\mathcal{X}^2)^3 }{ \text{Equations \eqref{1a}, \eqref{1c}, \eqref{1d} hold} }.
\end{align*}
Note that $S'_\Phi$ depends on $\Phi$ but $S$ does not.
Since $G$ is linear by assumption, it follows that $S$ is a subspace. Moreover, 
$
\bigcup_{\Phi\in\mathcal{C}} S'_{\Phi} = S
$.
To see why, first observe that $S'_\Phi \subseteq S$ for all $\Phi$ by definition. To prove the opposite inclusion, given any $\Theta\in S$, there exists $\Phi\in\mathcal{C}$ such that $y_2 = \Phi e_2$, which is possible because $\mathcal{C}$ is complete.
Define the quadratic forms on $S\to\R$:
\begin{align*}
\sigma_0(\Theta) &\defeq  \norm{y}^2 - \gamma^2\norm{u}^2, &
\sigma_1(\Theta) &\defeq \ip{ \bmat{e_2\\y_2} }{ M \bmat{e_2\\y_2} }.
\end{align*}
Item \ref{thm_it_ii} from Theorem~\ref{thm:main} states that for all $\Phi \in \mathcal{C}$ which satisfy $\sigma_1(\Theta) \ge 0$ for all $\Theta \in S'_\Phi$ , we have  $\sigma_0(\Theta) \le 0$.
But since $\bigcup_{\Phi\in\mathcal{C}} S'_{\Phi} = S$, Item \ref{thm_it_ii} is equivalent to the statement that for all $\Theta\in S$, we have $\sigma_1(\Theta) \ge 0 \implies \sigma_0(\Theta) \le 0$. Since $S$ is a subspace, we may apply Lemma~\ref{sproc3} and conclude that there exists $\tau \ge 0$ such that
\begin{equation}\label{eq:losslessS}
\text{for all }\Theta\in S,\qquad \sigma_0(\Theta) + \tau \sigma_1(\Theta) \le 0.
\end{equation}
Substituting the definitions of $\sigma_0$ and $\sigma_1$ into \eqref{eq:losslessS} yields
\begin{equation}\label{eq:lossSy}
\norm{y}^2 - \gamma^2\norm{u}^2
+ \tau \ip{ \bmat{e_2\\y_2} }{ M \bmat{e_2\\y_2} } \le 0.
\end{equation}
Let $\bar{S} \subseteq S$ be the subspace of $S$ with $u=0$.  Restricting~\eqref{eq:lossSy} to $\bar{S}$, we have
$y_2 = e_1$ and $e_2 = y_1 = Ge_1$. Making these substitutions results in:
\[
\norm{Ge_1}^2 + \norm{e_1}^2
+ \tau\ip{ \bmat{Ge_1\\e_1} }{ M \bmat{Ge_1\\e_1} } \le 0, 
\]
for all $e_1\in\mathcal{X}$. If $\tau=0$, then from nontriviality of $\mathcal{X}$, the above inequality clearly cannot hold for all $e_1$, a
contradiction. Therefore it must be the case that $\tau > 0$. Rearranging and
dividing by $\tau$, we obtain:
\[
\ip{ \bmat{Ge_1\\e_1} }{ (-\tfrac{1}{\tau}I - M ) \bmat{Ge_1\\e_1} } \ge 0
\qquad\text{for all }e_1\in\mathcal{X}.
\]
Define $N \defeq -\tfrac{1}{\tau}I - M$. Then we have $M+N =
-\tfrac{1}{\tau}I \prec 0$, which is \eqref{G} and so we have proven~\ref{thm_it_i} of Theorem \ref{thm:main}.
\qedhere

\section{Proof of Corollary~\ref{cor:cumulative}}\label{sec:appendix2}
	To prove \ref{Cit_i}$\implies$\ref{Cit_ii}, suppose~\eqref{G_cum} and~\eqref{phi_cum} hold. For each $T\ge 0$, apply Theorem~\ref{thm:main} with $\mathcal{X}=\ltwoe$ and the semi-inner product $\ip{\cdot}{\cdot}_T$ and choose $\mathcal{C}$ to be the set of causal operators. From the proof of Theorem~\ref{thm:main}, $\gamma$ only depends on the choice of $M$ and $N$, and not on the choice of semi-inner product. Well-posedness (feedback invariance of $\mathcal{C}$) also does not depend on the semi-inner product, so fixing $\Phi$ and $u\in\ltwo$ always yields the same $R_{uy}$ regardless of $T$. Therefore, we have $\norm{y}_T \le \gamma\norm{u}_T$ for all $T\ge 0$ and $\gamma,y$ are independent of $T$. Since $u\in\ltwo$, letting $T\to\infty$ implies $y\in\ltwo$ and we obtain \eqref{norm_l2}.
	
	To prove \ref{Cit_ii}$\implies$\ref{Cit_i}, suppose~\eqref{phi_cum} and~\eqref{norm_l2} hold. Since $\mathcal{C}$ is the set of causal operators, the map $u\to y$ is causal, which implies that $\norm{y}_T \le \gamma\norm{u}_T$ holds for all $T\ge 0$ (see, e.g.~\cite[Lem.~6.2.11]{vidyasagar2002nonlinear} for a proof). Apply Theorem~\ref{thm:main} as before to obtain~\eqref{G_cum}. It remains to show that the same $N$ can be used for all $T \ge 0$. We provide an outline. From the proof of Theorem~\ref{thm:main}, we obtain a $\tau_T$ from which $N_T = -\tfrac{1}{\tau_T}I-M$. It is straightforward to show that any $\tau \ge \tau_T$ may be used in the place of $\tau_T$. One can also show that if $M\npreceq 0$, the sequence $\{\tau_T\}_{T\ge 0}$ is uniformly bounded so we can pick $N = \sup_{T\ge 0}\bigl(-\tfrac{1}{\tau_T}I-M\bigr)$.\qedhere

%%%%%%%%%%%%%%%%%%%%%%%%%%%%%%%%%%%%%%%%%%%%%%%%%%%%%%%%%%%%%%%%%%%%%%%%%%%%%%%%
\bibliographystyle{abbrv}
\begin{small}
	\bibliography{refs}
\end{small}

\end{document}